\newtheorem{thm}{Theorem}[section]
\newtheorem{cor}[thm]{Corollary}
\newtheorem{lem}[thm]{Lemma}
\newtheorem{prop}[thm]{Proposition}
\theoremstyle{remark}
\newtheorem{rem}[thm]{Remark}
\newtheorem{ex}[thm]{Example}
\theoremstyle{definition}
\newtheorem{defi}[thm]{Definition}
\title{On Projections of Free Semialgebraic Sets}
\author{Tom Drescher}
\address{T.D., Universit\"at Innsbruck, 6020 Innsbruck, Austria}
\email{tom.drescher@uibk.ac.at}
\author{Tim Netzer}
\address{T.N., Universit\"at Innsbruck, 6020 Innsbruck, Austria}
\email{tim.netzer@uibk.ac.at}
\author{Andreas Thom  }
\address{A.T., TU Dresden, 01062 Dresden, Germany}
\email{andreas.thom@tu-dresden.de}
\date{\today}                                           
\begin{document}
\maketitle

\begin{abstract}An important result in real algebraic geometry is the projection theorem: every projection of a semialgebraic set is again semialgebraic. This theorem and some of its conclusions lie at the basis of many other results, for example the decidability of the theory of real closed fields, and almost all Positivstellens\"atze. Recently, non-commutative real algebraic geometry has evolved as an exciting new area of research, with many important applications. In this paper we examine to which extend a projection theorem is possible in the non-commutative (=free) setting. Although it is not yet clear what the correct notion of a free semialgebraic set is, we review and extend some results that count against a full free projection theorem. For example, it is undecidable whether a free statement holds for all matrices of at least one size. We then prove a weak version of the projection theorem: projections along linear and separated variables yields a semi-algebraically parametrised free semi-algebraic set.
\end{abstract}

\tableofcontents

\section{Introduction and Preliminaries} The {\it projection theorem} in real algebraic geometry is a basic but utmost important result on  semialgebraic sets. A semialgebraic set is defined as a Boolean combination of sets of the form $$W(p)=\left\{ a\in\mathbb R^n\mid p(a)\geq 0\right\}$$ where $p\in\mathbb R[x_1,\ldots x_n]$ is a multivariate polynomial. The projection theorem states that any projection (and thus any polynomial image) of a semialgebraic set is again semialgebraic. This implies for example that closures, interiors, convex hulls  etc.\ of semialgebraic sets are again semialgebraic. 

Proofs for the projection theorem can be found for example in \cite{bcr, pd}. When analyzing them, it turns out that the semialgebraic description of a projection can be obtained in an explicit and uniform way from the input polynomials that define the initial set. In particular, when evaluated over any real closed extension field of the reals, the projection of the inital set is still defined by the same semialgebraic formula as over $\mathbb R$. Since projections correspond to existential quantifiers in formulas, this immediately leads to {\it quantifier elimination} over real closed fields: for any first order formula in the language of ordered rings, there is a quantifier-free formula, which is equivalent over any real closed field. From this it is finally only a small step to the {\it Tarski-Seidenberg transfer principle}: any two real closed fields fulfill the same first order formulas in the language of ordered rings. This implies decidability of the first order theory of real closed fields. It is also at the core of Artin's proof of Hilbert's 17th Problem, and indeed of almost every {\it Positivstellensatz} until today.

 A recent and flourishing area of research in real algebra and geometry concerns {\it non-commutative} semialgebraic sets. Instead of points of $\mathbb R^n$, polynomials are evaluated at Hermitian matrix tuples, and $\geqslant 0$ means that the resulting matrix is positive semidefinite. Such sets appear in many applications, for example in linear systems engineering, quantum physics, free probability and semidefinite optimization (see \cite{hkc1} for an overview). Given the profound importance of the projection theorem in the classical setup, a clarification of its status in the non-commutative context is clearly necessary. Before we explain the few existing results, let us introduce the non-commutative (= {\it free}) setup in detail. 
 
 Let $\mathbb C\langle x_1,\ldots, x_n\rangle$ be the polynomial algebra in non-commuting variables. An element  is a $\mathbb C$-linear combination of words in the letters $x_1,\ldots,x_n$, where the order of the letters does matter. We consider the involution $*$ on $\mathbb C\langle x_1,\ldots, x_n\rangle$ that is uniquely defined by $x_i^*=x_i$ for all $i$, and the fact that $*$ is complex conjugation on $\mathbb C$. By $\mathbb C\langle x_1,\ldots, x_n\rangle_h$ we denote the $\mathbb R$-subspace of {\it Hermitian elements}, i.e. fixed points of the involution. 
 
 Free polynomials  can be evaluated at tuples of square matrices; the result is a matrix of the same size. Note that the constant term is multiplied with the identity matrix of the correct size.  If $p\in \mathbb C\langle x_1,\ldots, x_n\rangle_h$ and $A_1,\ldots,A_n\in \mathbb M_s(\mathbb C)_h$ are Hermitian matrices, then $p(A_1,\ldots, A_n)\in \mathbb M_s(\mathbb C)_h$ is again Hermitian. It thus makes sense to define \begin{align*}W_s(p)&:= \left\{ (A_1,\ldots,A_n)\in \mathbb M_s(\mathbb C)_h^n\mid p(A_1,\ldots,A_n)\geqslant 0\right\} \\ O_s(p)&:= \left\{ (A_1,\ldots,A_n)\in \mathbb M_s(\mathbb C)_h^n\mid p(A_1,\ldots,A_n)> 0\right\},\end{align*}  where $\geqslant 0$ and $>0$ denote positive semidefinite- and positive definiteness. This can be defined for any matrix size $s$, and in many applications the size of the matrices is not bounded a priori. So it is convenient to consider the whole collection \begin{align*}W(p)&:= \bigcup_{s\geq 1} W_s(p) \quad \mbox{ and } \quad O(p):= \bigcup_{s\geq 1} O_s(p).\end{align*}
A slight generalization is often useful. Consider the matrix algebra $\mathbb M_d\left(\mathbb C\langle x_1,\ldots,x_n\rangle\right)$ of {\it free matrix polynomials} of size $d$. A matrix polynomial can still be evaluated at tuples of matrices; if we plug in matrices of size $s$, the result will be of size $ds$. The involution extends canonically to matrix polynomials, by transposing matrices and applying $*$ entrywise. Again we denote by $\mathbb M_d\left(\mathbb C\langle x_1,\ldots,x_n\rangle\right)_h$ the space of Hermitian elements. Hermitian matrix polynomials evaluated at Hermitian matrices result in Hermitian matrices. We can thus define $W_s(p), O_s(p), W(p)$ and $O(p)$ just as before. 
 
 We will take these sets $W(p)$ and $O(p)$ as building blocks of free semialgebraic sets.  Here we already see a significant difference between the commutative and the non-commutative setting. Whereas $O(p)$ is just the complement of $W(-p)$ in the commutative case, this is clearly not true in the free setup. So even if we allow for Boolean combinations, we have to include the sets $O(p)$ explicitly. Realizing this, there is clearly more room for possible definitions of semialgebraic sets. One could for example allow to use determinant and trace to define sets, or more subtle conditions on the eigenvalues of matrices.   It seems that a good general notion of semialgebraic sets has not been proposed in the literature so far. We do not have an answer to this problem, however, our counterexamples and results will hopefully help clarifying the problem in the future.
 
 Projections of free sets are defined in the straightforward way, by mapping a matrix tuple $(A_1,\ldots, A_n)$ to $(A_1,\ldots, A_{n-k})$, say.
  In the following section we explain why a general projection theorem as in the commutative setting can probably not be expected for free semialgebraic sets. We discuss the few existing negative results, and provide some new examples and constructions. We also prove that checking whether a free formula holds for matrices of some size is an undecidable problem. So even under very general notions of semialgebraicity, a projection theorem will probably fail. In the third section we will then prove with Theorem \ref{el} and \ref{pos2} two weak projection theorems, however under strong additional assumptions. Variables that occur only separated from the others can be eliminated; the results is described by infinitely many inequalities, parametrized in a nice semialgebraic way however. We hope that the results will eventually lead to a suitable notion of semi-algebraically parametrised free semi-algebraic sets which are closed under certain projections, see the remarks after Theorem \ref{pos}.
  
\section{Counterexamples, Undecidability, and Speculations}

To the best of our knowledge, there are two negative results on projections of free semialgebraic sets. In \cite{hm1} it is shown that there exists a {\it linear} matrix polynomial  $p$, such that a projection of $O(p)$ cannot be realized as a finite union of intersections of sets $O(q)$. This in fact contrasts the commutative case, where this is always true, due to the {\it Finiteness Theorem} (see Section 2 of \cite{pd} and the many references therein). Currently, it is not clear whether the projection  might still be semialgebraic under a suitable generalized notion of semialgebraic sets.

The second negative result is from \cite{put}. Translated to our setting it is the following: With sets $W(p)$, Boolean combinations, and projections, one can construct the following (one-dimensional) free  set $X=\bigcup_{s\geq 1} X_s\colon$ $$X_s=\left\{ -sI_s, -(s-1)I_s,\ldots, -I_s,0,I_s,\ldots, (s-1)I_s, sI_s\right\}\subseteq \mathbb M_s(\mathbb C)_h.$$ The main idea here is to codify the relations of $sl_2(\mathbb C)$ into inequalities for matrices, and use that certain elements then have only integer eigenvalues. This second example imposes quite severe obstructions to a projection theorem. For example, whether $\lambda I_s$ belongs to a Boolean combination of sets $W_s(p)$ and $O_s(p)$ is independent of the size $s$. Note that this example also opens the way to speak about integers, and could maybe be used to formulate undecidable statements. However, if an arithmetic statement $\exists x \forall y \cdots $ is formalized as $\exists x\in X\  \forall y\in X \cdots$ it will have a different meaning when evaluated at a fixed matrix size. When evaluated at matrices of size $s$, the integer $y$ will be at most $s$, and will thus not exhaust all integers. We can however use a deep result from group theory to prove an undecidability theorem for free formulas  (see for example \cite{co} for details on computability theory):

\begin{thm}\label{undec}
The question whether a free closed formula holds for at least one size of matrices is undecidable.\end{thm}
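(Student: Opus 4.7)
The plan is to reduce from a deep group-theoretic undecidability statement: for suitable finitely presented groups $G = \langle g_1,\ldots,g_n \mid r_1,\ldots,r_m\rangle$ together with a word $w$ in the generators, it is undecidable whether $G$ admits a finite-dimensional unitary representation $\pi$ with $\pi(w)\neq I$. Such a statement is available through Slofstra-type results on the finite-dimensional satisfiability of systems of group relations and related recent developments in the area; the reduction goes from the halting problem, so the source problem is $\Sigma_1^0$-complete, matching the natural $\Sigma_1^0$ character of ``does a free closed formula hold at some matrix size''.

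Given such an input $(G,w)$, I would assemble a single closed free formula $\phi$ whose truth at some matrix size is equivalent to the existence of such a representation. Since the formalism only admits Hermitian variables, I would encode each would-be unitary $U_i$ via two Hermitian variables $X_i,Y_i$ by setting $U_i := X_i + i Y_i$ inside the free $*$-algebra, so that $U_i^* = X_i - i Y_i$. A matrix-polynomial equation $q = 0$ is then encoded by the single inequality $-qq^* \geqslant 0$, since $qq^*$ is automatically positive semidefinite and its vanishing forces $q = 0$. Unitarity of each $U_i$, together with each defining relation $r_j(U_1,\ldots,U_n) = I$, becomes a conjunction of such equations, while the separating condition $w(U_1,\ldots,U_n)\neq I$ is the Boolean negation of the analogous encoding. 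Bundling everything under existential quantifiers over the $X_i,Y_i$ yields the desired closed formula $\phi$. By construction, $\phi$ holds at size $s$ if and only if $G$ admits a unitary representation on $\mathbb{C}^s$ sending $w$ to a nontrivial unitary, and hence $\phi$ holds at some matrix size if and only if $G$ admits any such representation.

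The main obstacle, beyond routine bookkeeping with the $*$-algebra encoding, is pinning down the correct group-theoretic input. Classical undecidable problems such as triviality of finitely presented groups (Adian--Rabin) or the word problem (Novikov--Boone) do not directly suffice, because a nontrivial finitely presented group may fail to admit any nontrivial finite-dimensional representation at all (as in Higman-type constructions). One therefore needs an undecidability statement intrinsic to the existence of finite-dimensional matrix representations with prescribed nontrivial behavior, which is precisely what the deep modern results in this area supply.
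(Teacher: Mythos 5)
Your approach is essentially the same as the paper's: reduce from an undecidability result about finite-dimensional unitary representations of finitely presented groups, and encode the existence of such a representation as a free closed formula (Hermitian pairs for unitaries, $-qq^*\geqslant 0$ for equalities), correctly observing that Adian--Rabin or Novikov--Boone would not suffice on their own since a nontrivial finitely presented group may admit no nontrivial finite-dimensional representation. The specific reference the paper relies on is Bridson and Wilton, \emph{The triviality problem for profinite completions}, which produces a recursively enumerable sequence of finitely presented groups $(G_i)$ for which the set of indices $i$ such that $G_i$ admits a nontrivial finite-dimensional unitary representation is not recursive; you should cite that result in place of the vaguer appeal to Slofstra-type developments.
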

\begin{proof}
It is shown in \cite{bw} that there exists a recursively enumerable sequence of finitely presented groups $\left(G_i\right)_{i\in\mathbb N}$, such that the set $$\left\{ i\in\mathbb N\mid G_i \mbox{ has a nontrivial finite-dimensional unitary representation}\right\}$$ is not recursive (=decidable). The statement that a finitely presented group has a nontrivial unitary representation can easily be expressed as a free closed formula. So we obtain a recursively enumerable sequence of free closed formulas $\left(\varphi_i\right)_{i\in\mathbb N},$ for which there is no algorithm that decides for each $\varphi_i$, whether it holds for at least one size of matrices.
\end{proof}

\begin{rem}
If $\varphi$ is a free closed formula, the set $$\mathcal I_\varphi:=\left\{ s\in\mathbb N\mid \varphi \mbox{ holds for matrices of size } s\right\}$$ is a recursive set. This follows from commutative quantifier elimination for each fixed  matrix size. Since each recursive set is representable in Peano arithmetic, there exists a formula $\psi$ in the first order language of arithmetic, such that for any $s\in\mathbb N$, \begin{align*}s\in\mathcal I_\varphi \ &\Leftrightarrow\ {\rm PA} \vdash \psi(s) \\  s\notin\mathcal I_\varphi \ &\Leftrightarrow\ {\rm PA} \vdash \neg\psi(s)\end{align*}where ${\rm PA}$ denotes the Peano axioms for arithmetic.  The statement $\exists s\psi$ however will sometimes be independent from ${\rm PA},$ which is a direct consequence of Theorem \ref{undec}.

\end{rem}

Theorem \ref{undec} also provides severe obstructions to a projection theorem. Even if every formula is equivalent to a quantifier-free formula (in whatsoever free language), these formulas must either be undecidable in the same sense, or they cannot be found in an algorithmic way.

\vspace{0.1cm}

We will finish this section with some more explicit constructions, indicating what can be expressed with quantifiers. Note that we will not even use $\geqslant$ in the following, just polynomial expressions, $=$, Boolean combinations and projections. 

Let $X$ be a Hermitian matrix. We can express the statement, that $X$ has rank $1$ as follows:
\[
\operatorname{rk}(X)=1\quad\Leftrightarrow\quad X\neq 0\wedge \forall\,Y\left(YXY=0\vee \exists\,Z\colon ZYXYZ=X\right).
\]
The implication $\Rightarrow$ is easy to see. For $\Leftarrow$ note that $X$ has at least rank $1$ because $X\neq 0$. Therefore there is an eigenvalue $\lambda\neq 0$ of $X$ and a corresponding eigenvector $v$. Now for $Y=vv^*$ we have $YXY\neq 0$ and hence $ZYXYZ=X$ for some $Z$. That gives us
\[
\operatorname{rk}(X)=\operatorname{rk}(ZYXYZ)\leq\operatorname{rk}(YXY)\leq 1.
\]
Together we get $\operatorname{rk}(X)=1$. 

We can further express the statement, that the trace of $X$ vanishes. It is well known, that this is equivalent to $X$ being a commutator of two matrices $Y,Z$. However those two matrices are not necessarily Hermitian. We can solve this problem by writing $Y$ and $Z$ as $Y=Y_1+iY_2$ and $Z=Z_1+iZ_2$ for Hermitian matrices $Y_1,Y_2,Z_1,Z_2$. A second problem is that the resulting polynomial is not Hermitian. This problem can be solved by observing, that for an arbitrary non-commutative polynomial $p$ the statement $p=0$ is equivalent to $p^*p=0$. 

Finally we can state that $X$ is a scalar matrix if and only if it commutes with all (Hermitian) matrices:
\[
X\text{ scalar}\quad\Leftrightarrow\quad \forall\,Y\colon i(XY-YX)=0.
\]

So far the three statements $\operatorname{rk}(X)=1$, $\operatorname{tr}(X)=0$ and $X$ is scalar were very basic. We can now use those basic statements to express something more complicated:
\begin{eqnarray*}
&&X=\operatorname{tr}(Y)\cdot I \\ 
&\Leftrightarrow & (X\text{ scalar})\wedge\exists\,P \left(P^2=P\wedge (\operatorname{rk}(P)=1)\wedge(\operatorname{tr}(PXP-Y)=0)\right).
\end{eqnarray*}
For the implication $\Rightarrow$ let $v$ be any unit vector and let $P=vv^*$. Then $P^2=P$, $\operatorname{rk}(P)=1$ and
\[
\operatorname{tr}(PXP-Y)=\operatorname{tr}(Y)\cdot\operatorname{tr}(vv^*vv^*)-\operatorname{tr}(Y)=0.
\]
For the converse note that $X=\lambda I$ for some real number $\lambda$ since $X$ is scalar. Further we have
\[
0=\operatorname{tr}(PXP-Y)=\lambda \operatorname{tr}(P)-\operatorname{tr}(Y).
\]
Since $P^2=P$ the matrix $P$ has only eigenvalues $0$ and $1$ and since $\operatorname{rk}(P)=1$ the eigenvalue $1$ has multiplicity $1$. Hence we get $\operatorname{tr}(P)=1$. That gives us $\lambda=\operatorname{tr}(Y)$ and the claim follows.

As we just noted, an $s\times s$-matrix with $P^2=P$ has only eigenvalues $0$ and $1$. Hence the trace of such a matrix is always an element of $\{0,\dots,s\}$. So consider the following formula:
\[
\operatorname{intscal}(X)\quad :\Leftrightarrow\quad \exists\,Y\left(Y^2=Y\wedge X=\operatorname{tr}(Y)\cdot I\right),
\]
 which is clearly equilvalent to $$\exists\,k\in\{0,\dots,{\rm size}(X)\}\colon\quad X=k\cdot I.$$ This is more or less the same result as in the above construction from \cite{put}. We can now use this to make statements about the matrix size. For example we can give a closed formula, which is true if and only if the matrix size is prime or one:
\begin{align*}
\forall\,X,Y,Z\colon & ((X=\operatorname{tr}(I)\cdot I)\wedge\operatorname{intscal}(Y)\wedge \\
&\operatorname{intscal}(Z)\wedge (X-YZ)(X-YZ)^*=0)\\
&\Rightarrow (Y=I\vee Z=I).
\end{align*}

\section{Some Free Elimination}

After we have seen some examples that impose obstacles to a free Projection Theorem, we want to prove some positive results. We can eliminate existential quantifiers under certain strong assumptions, and obtain a description by infinitely many inequalities, that are however parametrized in a good semialgebraic way. 

Throughout this section,  we denote by  $M^t,\overline M$ and $M^*$ the transpose, conjugate and conjugate-transpose of a matrix $M\in\mathbb M_s(\mathbb C)$, respectively. We equip the space $\mathbb M_d(\mathbb C)$ with the inner product $$\langle A,B\rangle ={\rm tr}(B^*A).$$
Any matrix-polynomial  $p\in\mathbb M_d(\mathbb C\langle x_1,\ldots,x_n\rangle)$ can be written as $$p=\sum_{\omega} P_\omega \otimes\omega,$$ where the $\omega$ are words in $x_1,\ldots, x_n,$ $P_\omega \in\mathbb M_d(\mathbb C),$ $\otimes$ denotes the Kronecker product, and the sum is finite (using the Kronecker product here is useful to see how evaluation at matrices works).  For  $W\in\mathbb M_d(\mathbb C)$ we define $$p_W:= \sum_\omega W^*P_\omega W\otimes \omega \in \mathbb M_d(\mathbb C\langle x_1,\ldots,x_n\rangle)$$  and $$p_{\langle W\rangle}:= \sum_\omega \langle P_\omega,\overline W\rangle \cdot \omega \in \mathbb C\langle x_1,\ldots,x_n\rangle.$$ 
 
 \begin{defi}
Let $S$ be a subspace of $\mathbb{M}_d(\mathbb{C})_h$. We call $S$ {\it definite}, if there exists a definite element in $S$. We call $S$ {\it indefinite}, if every element in $S\backslash\{0\}$ is indefinite.
\end{defi}

The following is our main result on projections of free semialgebraic sets:
\begin{thm}\label{el}Let $p\in\mathbb M_d(\mathbb C\langle x_1,\ldots,x_n\rangle)_h$ and let $B_1,\ldots,B_m\in\mathbb M_d(\mathbb C)_h$ such that the vector space $\operatorname{span}_{\mathbb R}\{B_1,\dots,B_m\}$ is an indefinite subspace of $\mathbb{M}_d(\mathbb{C})_h$.  Then for any Hilbert space $\mathcal H$ and any choice $T_1,\ldots, T_n\in\mathbb B(\mathcal H)_h$, the following are equivalent: \begin{itemize}\item[(i)] $\exists  S_1,\ldots, S_m\in \mathbb B(\mathcal H)_h$ such that $p(T_1,\ldots, T_n)+ B_1\otimes S_1+\cdots +B_m\otimes S_m \geqslant 0.$\item[(ii)] For all $r\in\mathbb N$ with $r\leq {\rm dim}(\mathcal H)$ and for all $W_j\in \mathbb M_{d,r}(\mathbb C)$ with $$ \sum_j W_j^*B_iW_j=0 \quad \mbox{for all} \quad i=1,\ldots,m $$ we have $$\sum_{j} p_{W_j}(T_1,\ldots, T_n) \geqslant 0.$$ \end{itemize} 
\end{thm}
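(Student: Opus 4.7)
The plan is to establish (i) $\Rightarrow$ (ii) by a direct algebraic identity and (ii) $\Rightarrow$ (i) by Hahn-Banach separation. The easy direction rests on the identity $p_W(T_1,\ldots,T_n) = (W^* \otimes I_\mathcal{H})\, p(T_1,\ldots,T_n)\,(W \otimes I_\mathcal{H})$, valid for every $W \in \mathbb{M}_{d,r}(\mathbb{C})$, together with its analogue $(W^* \otimes I)(B_i \otimes S_i)(W \otimes I) = W^* B_i W \otimes S_i$. For any family $\{W_j\}$ satisfying $\sum_j W_j^* B_i W_j = 0$ for every $i$, summing these yields
\[
\sum_j (W_j^* \otimes I)\bigl(p(T) + \sum_i B_i \otimes S_i\bigr)(W_j \otimes I) \;=\; \sum_j p_{W_j}(T),
\]
and each summand on the left is positive as the congruence of the positive operator provided by (i); hence $\sum_j p_{W_j}(T) \geqslant 0$.

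For the converse I would argue contrapositively. Set $V := \operatorname{span}_{\mathbb{R}}\{B_1,\ldots,B_m\}$, $L := \{\sum_i B_i \otimes S_i : S_i \in \mathbb{B}(\mathcal{H})_h\}$, and $\mathcal{K} := \mathcal{P}_+ + L \subset \mathbb{B}(\mathbb{C}^d \otimes \mathcal{H})_h$, so that (i) is exactly $p(T) \in \mathcal{K}$. The indefiniteness of $V$ is used here to ensure $L \cap \mathcal{P}_+ = \{0\}$: if $Y = \sum_i B_i \otimes S_i \geqslant 0$, then for every unit $\psi \in \mathcal{H}$ the compression $(I_d \otimes \psi^*)\, Y\, (I_d \otimes \psi) = \sum_i \langle S_i \psi, \psi\rangle B_i$ is a PSD element of $V$, hence zero; running over all $\psi$ forces the quadratic form of $Y$ to vanish on all product vectors, so $Y = 0$. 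A standard argument in convex analysis then gives that $\mathcal{K}$ is closed (in the ultraweak topology), and Hahn-Banach produces a positive trace-class operator $\rho \neq 0$ with $\operatorname{tr}(\rho X) \geq 0$ for every $X \in \mathcal{K}$ and $\operatorname{tr}(\rho\, p(T)) < 0$. The first condition is equivalent to $\rho \geqslant 0$ together with the partial-trace identity $\operatorname{tr}_{\mathbb{C}^d}((B_i \otimes I)\rho) = 0$ for every $i$.

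To convert $\rho$ into a concrete violation of (ii), replace it by its compression $\rho_N := (I_d \otimes P_N)\, \rho\, (I_d \otimes P_N)$ against a finite-rank projection $P_N$ on $\mathcal{H}$ with $P_N \to I$ strongly. Since $I_d \otimes P_N$ commutes with $B_i \otimes I$, the partial-trace identity is preserved; and since $\rho_N \to \rho$ in trace norm, $\operatorname{tr}(\rho_N\, p(T)) < 0$ once $N$ is large. Now $\rho_N$ is supported on $\mathbb{C}^d \otimes \mathcal{F}$ with $\mathcal{F} := P_N \mathcal{H}$ of some dimension $r \leq \dim \mathcal{H}$. Spectrally write $\rho_N = \sum_{j=1}^J \xi_j \xi_j^*$, fix an orthonormal basis $\{f_1,\ldots,f_r\}$ of $\mathcal{F}$, and identify each $\xi_j \in \mathbb{C}^d \otimes \mathcal{F}$ with a matrix $W_j \in \mathbb{M}_{d,r}(\mathbb{C})$; the partial-trace condition becomes $\sum_j W_j^* B_i W_j = 0$, and the ``maximally entangled'' vector $\eta := \sum_{k=1}^r e_k \otimes f_k \in \mathbb{C}^r \otimes \mathcal{F}$ satisfies $(W_j \otimes I)\eta = \xi_j$, so
\[
\bigl\langle \sum_j p_{W_j}(T)\, \eta,\, \eta\bigr\rangle \;=\; \sum_j \langle p(T)\xi_j, \xi_j\rangle \;=\; \operatorname{tr}(\rho_N\, p(T)) \;<\; 0,
\]
contradicting (ii). The main obstacle is using the indefiniteness of $V$ to secure the closedness of $\mathcal{K}$: this is precisely what powers the strict Hahn-Banach separation and delivers a $\rho$ with $\operatorname{tr}(\rho\, p(T)) < 0$; once this is in hand, the partial-trace reformulation and the projection trick that yields a finite-rank $\rho_N$ are essentially routine.
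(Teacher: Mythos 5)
Your argument is correct, but it follows a genuinely different route from the paper's. The paper normalizes $p$ and the $B_i$ (making $\operatorname{tr} B_i=0$, the coefficients $P_\omega\in S^\perp$, and the $B_i$ orthonormal), then views $\mathcal V=\{B_1^t,\dots,B_m^t\}^\perp$ as an operator system in $\mathbb M_d(\mathbb C)$, shows that the induced $*$-linear map $W\mapsto p_{\langle W\rangle}(T)$ is $r$-positive for all $r\leq\dim\mathcal H$ using hypothesis~(ii), and invokes Arveson's Extension Theorem to extend it to a completely positive $\psi$ on all of $\mathbb M_d(\mathbb C)$; evaluating $\psi$ on the Choi matrix recovers $p(T)+\sum_i B_i\otimes\psi(B_i^t)\geqslant 0$ directly. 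You instead run the contrapositive through Hahn--Banach separation in the trace-class/bounded-operator duality, then compress the separating $\rho$ to finite rank and spectrally decompose it to manufacture the violating $W_j$; this is more hands-on, avoids Arveson, and makes the passage from the separating functional to condition~(ii) concrete via the maximally entangled vector $\eta$, which is a nice computation. The one place where you understate the difficulty is the weak-$*$ closedness of $\mathcal K=\mathcal P_++L$ when $\mathcal H$ is infinite-dimensional: the paper's Lemma on closed cones is purely finite-dimensional, and in the infinite-dimensional case one must invoke the Krein--\v Smulyan theorem and verify that a bounded ultraweakly convergent net $y_\alpha+z_\alpha$ ($y_\alpha\geqslant 0$, $z_\alpha=\sum_i B_i\otimes S_i^\alpha$) has $y_\alpha,z_\alpha$ individually bounded. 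That boundedness does follow by compressing with $I_d\otimes\psi\psi^*$ and applying the finite-dimensional indefiniteness argument in $\mathbb M_d(\mathbb C)_h$ uniformly over unit vectors $\psi$, but this deserves to be spelled out rather than waved through as ``standard,'' because it is precisely where the indefiniteness hypothesis earns its keep (the paper's use of it is hidden inside the orthogonality reduction and the fact that $\mathcal V$ contains the identity). With that step filled in, your proof is complete and correct.
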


Before we prove the theorem, let us comment on some of the conditions. Firstly, assume that $$S:=\operatorname{span}_{\mathbb R}\{B_1,\dots,B_m\}$$ is a definite subspace. Then clearly condition (i) is fulfilled for any choice of $T_i$, in which case the whole question is not very interesting. But of course $S$ could neither be definite nor indefinite. We will deal with  this case below. Secondly, if $\mathcal H$ is finite dimensional, we can clearly restrict to the single case $r=\dim(\mathcal H)$ in (ii). If $\mathcal H$ is infinite dimensional, we have to use all $r\in\mathbb N$. Thirdly, the length of the appearing sums can be bounded in terms of $r$. This is proven in Lemma \ref{cara} in the Appendix. 

Finally, note that the description from (ii) consists of inequalities only, but infinitely many. However, these inquealities are still parametrized well in terms of the input data. We call a set of the form appearing in (ii) a \emph{semi-algebraically parametrised free semi-algebraic set}. We do not yet dare to make a precise definition of this term, but we sincerely hope that the above theorem and Theorem \ref{pos2} will lead to a class of sets which are closed under certain projections. This then might guide further research on how to set up a model theoretic framework which satisfies a suitable form of quantifier elimination in form of a projection theorem.

\begin{proof}[Proof of Theorem \ref{el}]
By conjugating all coefficients with $W_j$ and summing up, it is obvious that (i) implies (ii). To prove the converse, first  note that both conditions (i) and (ii) are equivalent to the corresponding conditions, where $p$ is replaced by $p_A$ and $B_i$ is replaced by $A^*B_iA$ for an invertible matrix $A\in\mathbb{M}_d(\mathbb{C})$. By Lemma~\ref{ortho} from the Appendix we can choose $A\in\mathbb{M}_d(\mathbb{C})_h$ such that $A^2\in S^{\perp}$ and $A$ is invertible (we set $S:=\operatorname{span}\{ B_1,\ldots, B_m\}$). Thus, we can assume that $\operatorname{tr}(B_i)=0$ for $i=1,\dots,m$. Further note that both conditions are equivalent to the conditions, where a coefficient $P_{\omega}$ of $p$ is replaced by any element in $P_{\omega}+S$. Thus, we can assume that $P_{\omega}\in S^{\perp}$ for every word $\omega$. Finally note that we can assume that $B_1,\dots,B_m$ are orthonormal.

Now consider $$\mathcal V:=\{B_1^t,\ldots, B_m^t\}^\perp\subseteq \mathbb M_d(\mathbb C).$$ Observe that ${\rm id}_d\in\mathcal V$ follows from ${\rm tr}(B_i)=0,$ and $\mathcal V$ is closed under $*$ since each $B_i$ is Hermitian. Thus  $\mathcal V$ is an operator system in the $C^*$-algebra $\mathbb M_d(\mathbb C)$ (see for example \cite{pau} for details on operator systems). Now consider the following $*$-linear map \begin{align*} \varphi\colon \mathcal V&\rightarrow \mathbb B(\mathcal H) \\ W &\mapsto p_{\langle W\rangle}(T_1,\ldots, T_n).\end{align*}   We claim that $\varphi$ is $r$-positive, meaning that $\varphi\otimes {\rm id_{\mathbb M_{r}(\mathbb C)}}$ maps positive matrices to positive operators. So let $\left( W_{ij}\right)_{i,j}\in\mathbb M_{r}(\mathcal V)$ be positive semidefinite. So there are vectors $w_{1k},\ldots, w_{rk}\in\mathbb C^d$, such that  $$W_{ij}=\sum_k w_{ik}w_{jk}^*$$ for all $i,j.$ We now compute \begin{align*}\left( \varphi(W_{ij})\right)_{ij}&=\sum_{\omega,k} \left( \langle P_\omega, \overline{w_{ik}w_{jk}^*}\rangle \right)_{i,j}\otimes \omega(T_1,\ldots, T_n) \\ &=\sum_{\omega,k} \left( {\rm tr}(\overline w_{jk} w_{ik}^tP_\omega) \right)_{i,j}\otimes\omega(T_1,\ldots, T_n)\\ &=\sum_{\omega,k} \left( w_{ik}^tP_\omega \overline w_{jk}\right)_{i,j}\otimes\omega(T_1,\ldots, T_n)\\ &= \sum_ kp_{W_k}(T_1,\ldots,T_n), \end{align*} where $W_k$ is the matrix having $\overline w_{1k},\ldots, \overline w_{rk}$ as its columns. From assumption (ii) we see that this operator is positive semidefinite, provided $\sum_k  W_k^*B_i W_k=0$ for all $i$. But this follows easily from the fact that $W_{jk}\in\mathcal V$, i.e. $\langle B_i^t, W_{jk}\rangle =0$ for all $i,j,k$.

So as an $r$-positive map from an operator space to $\mathbb B(\mathcal H)$ (for all  $r\leq \dim \mathcal H$), $\varphi$ admits a completely positive extension $\psi\colon\mathbb M_d(\mathbb C)\rightarrow\mathbb B(\mathcal H),$ by Arveson's Extension Theorem (see again \cite{pau}). For any $U\in\mathbb M_d(\mathbb C)$ we have \begin{align*}\psi(U)&=\psi\left( U -\sum_i \langle U,B_i^t\rangle B_i^t\right) + \psi\left(\sum_i \langle U,B_i^t\rangle B^t\right) \\ &=\varphi\left( U-\sum_i\langle U,B_i^t\rangle B_i^t\right) +\sum_i\langle U,B_i^t\rangle \psi(B_i^t) \\ &= p_{\langle U\rangle}(T_1,\ldots,T_n) +\sum_i\langle U,B_i^t\rangle \psi(B_i^t).\end{align*} For the second equality we have used that $U-\sum_i\langle U,B_i^t\rangle B_i^t$ lies in $\mathcal V$, since the $B_i^t$ are orthonormal, and for the third that $\langle P_\omega, B_i\rangle =0$ for all coefficients $P_\omega$ of $p$ and all $i$.

Let $E_{kj}\in\mathbb M_d(\mathbb C)$ be the matrix with $1$ in the $(k,j)$-entry, and zeroes elsewhere. Then the Choi-matrix $E=\left(E_{jk}\right)_{j,k}\in \mathbb M_d(\mathbb M_d(\mathbb C))$ is positive semidefinite, and thus \begin{align*}0\leqslant \left( \psi(E_{jk})\right)_{j,k}&=\left(  p_{\langle E_{jk}\rangle}(T_1,\ldots,T_n)+ \sum_i\langle E_{jk},B_i^t\rangle\psi(B_i^t)\right)_{j,k}\\&= p(T_1,\ldots,T_n)+ \sum_i B_i\otimes\psi(B_i^t).\end{align*} This implies (i). \end{proof}

The next result is an elimination result for strict positivity. Here we will get rid of the assumption on  $\operatorname{span}\{B_1,\dots,B_m\}$ completely, but only later.

\begin{prop}\label{pos}
\label{el>}
Let $p\in\mathbb{M}_d(\mathbb{C}\langle x_1,\dots,x_n\rangle )_h$ and $B_1,\dots,B_m\in\mathbb{M}_d(\mathbb{C})_h$ be such that  $\operatorname{span}\{B_1,\dots,B_m\}$ is an indefinite subspace of $\mathbb{M}_d(\mathbb{C})_h$. Then for any Hilbert space $\mathcal{H}$ and any choice $T_1,\dots,T_n\in\mathbb{B}(\mathcal{H})_h$, the following are equivalent:
\begin{itemize}
\item[(i)] $\exists S_1,\dots,S_m\in\mathbb{B}(\mathcal{H})_h$ such that $p(T_1,\dots,T_n)+B_1\otimes S_1+\dots+B_m\otimes S_m > 0.$
\item[(ii)] There is some $\varepsilon >0,$ such that for all $ r\leq {\rm dim}(\mathcal H)$ and all $V_j\in\mathbb{M}_{d,r}(\mathbb{C})$ with $\sum_j V_j^*B_iV_j=0$ for all $i=1,\dots,m$ and $\sum_j V_j^*V_j={\rm id}_r$ we have
\[
\sum_j p_{V_j}(T_1,\dots,T_n) \geqslant \varepsilon\cdot  {\rm id}_r\otimes{\rm id}_{\mathcal H}.
\]
\end{itemize}
\end{prop}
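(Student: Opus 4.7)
The plan is to reduce Proposition~\ref{pos} to Theorem~\ref{el} by shifting $p$ by a small negative scalar multiple of the identity. The direction (i)$\Rightarrow$(ii) is a direct computation: if $p(T)+\sum_i B_i\otimes S_i\geq \delta\cdot \mathrm{id}_d\otimes \mathrm{id}_{\mathcal H}$ for some $\delta>0$, I would conjugate this inequality by $V_j\otimes \mathrm{id}_{\mathcal H}$ for any $V_j$ satisfying the constraints of (ii), and sum over $j$. The $B_i$-terms disappear because $\sum_j V_j^*B_iV_j=0$, the $p$-term becomes $\sum_j p_{V_j}(T)$, and the identity lower bound becomes $\delta\cdot (\sum_j V_j^*V_j)\otimes \mathrm{id}_{\mathcal H}=\delta\cdot \mathrm{id}_r\otimes \mathrm{id}_{\mathcal H}$, so (ii) holds with $\varepsilon=\delta$.

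For (ii)$\Rightarrow$(i), set $q:=p-\varepsilon\cdot \mathrm{id}_d\otimes 1$, which is still Hermitian and has the same $B_i$'s (so the indefiniteness hypothesis still applies). Its coefficients differ from those of $p$ only in the constant term, hence $q_W=p_W-\varepsilon\cdot W^*W\otimes 1$ for every $W$. I would apply Theorem~\ref{el} to $q$: its output will be $S_i$'s with $q(T)+\sum_i B_i\otimes S_i\geq 0$, which rearranges to $p(T)+\sum_i B_i\otimes S_i\geq \varepsilon\cdot \mathrm{id}_d\otimes \mathrm{id}_{\mathcal H}>0$, giving (i). The task therefore reduces to verifying condition~(ii) of Theorem~\ref{el} for $q$, namely that for every $W_j\in\mathbb M_{d,r}(\mathbb C)$ with $\sum_j W_j^*B_iW_j=0$ one has
\[
\sum_j p_{W_j}(T)\geq \varepsilon\cdot \Bigl(\sum_j W_j^*W_j\Bigr)\otimes \mathrm{id}_{\mathcal H}.
\]

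The main technical point, and essentially the only genuinely new work, is that the $V_j$'s in condition~(ii) of the proposition are normalized by $\sum_j V_j^*V_j=\mathrm{id}_r$, whereas the $W_j$'s supplied by Theorem~\ref{el} are arbitrary. To bridge this, set $M:=\sum_j W_j^*W_j\geq 0$. When $M$ is invertible, put $V_j:=W_jM^{-1/2}$; these satisfy both $\sum_j V_j^*B_iV_j=M^{-1/2}\bigl(\sum_j W_j^*B_iW_j\bigr)M^{-1/2}=0$ and $\sum_j V_j^*V_j=\mathrm{id}_r$, so assumption~(ii) gives $\sum_j p_{V_j}(T)\geq \varepsilon\cdot \mathrm{id}_r\otimes \mathrm{id}_{\mathcal H}$. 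Since $p_{V_j}(T)=(M^{-1/2}\otimes \mathrm{id}_{\mathcal H})\,p_{W_j}(T)\,(M^{-1/2}\otimes \mathrm{id}_{\mathcal H})$, conjugating by $M^{1/2}\otimes \mathrm{id}_{\mathcal H}$ yields exactly the required bound. The singular case is handled by splitting $\mathbb C^r=\ker(M)\oplus \ker(M)^\perp$: since $\sum_j \|W_jx\|^2=\langle Mx,x\rangle=0$ for $x\in\ker(M)$, each $W_j$ vanishes on $\ker(M)$, so both sides of the required inequality are block-diagonal with zero block on $\ker(M)$, reducing matters to the invertible case on $\ker(M)^\perp$. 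No further difficulty arises, since the indefiniteness hypothesis is inherited automatically by Theorem~\ref{el}.
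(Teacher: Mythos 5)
Your proof is correct and follows essentially the same strategy as the paper: the direction (i)$\Rightarrow$(ii) by conjugation and summation is identical, and (ii)$\Rightarrow$(i) in both cases reduces to Theorem~\ref{el} applied to $q=p-\varepsilon\cdot\mathrm{id}_d\otimes 1$, with the only real work being the renormalization of an arbitrary family $W_j$ to one with $\sum_j V_j^*V_j=\mathrm{id}$. The paper handles this renormalization via an invertible change of basis $U$ with $U^*(\sum_j W_j^*W_j)U$ equal to a coordinate projection, whereas you use $M^{-1/2}$ for $M=\sum_j W_j^*W_j$ invertible and a block reduction on $\ker(M)^\perp$ otherwise; these are minor technical variants of the same step.
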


Let us also comment on condition (ii) here. In case that $\mathcal H$ is finite-dimensional, there are only finitely many choices of $r$ to check. For any such $r$, the set of all possible tuples of $V_j$ fulfilling the condition is compact (again using Lemma \ref{cara} from the Appendix). Thus it is easy to see that the statement ''There is some $\varepsilon >0,\ldots$'' can simply be replaced by strict positivity: $\sum_j p_{V_j}(T_1,\dots,T_n) >0$.

\begin{proof}[Proof of Proposition \ref{pos}]
(i)$\Rightarrow$(ii): By assumption (i) there are $S_1,\dots,S_m$ and an $\varepsilon>0$ such that
\[
p(T_1,\dots,T_n)+B_1\otimes S_1+\dots+B_m\otimes S_m \geqslant \varepsilon\cdot {\rm id}_d\otimes {\rm id}_{\mathcal H}.
\]
For all $r\leq {\rm dim}(\mathcal H)$ and all $V_j\in\mathbb{M}_{d,r}(\mathbb{C})$ with $\sum_j V_j^*B_iV_j =0$ for all $i=1,\dots,m$ and $\sum_j V_j^*V_j ={\rm id}_r$ we immediately get
\[
\sum_j p_{V_j}(T_1,\dots,T_n) \geqslant \varepsilon\cdot \left(\sum_j V_j^*V_j\right)\otimes {\rm id}_{\mathcal H} =\varepsilon\cdot {\rm id}_r\otimes  {\rm id}_{\mathcal H}.
\]
For (ii)$\Rightarrow$(i) define $$q:=p-\varepsilon\cdot {\rm id}_d\otimes 1\in\mathbb{M}_d(\mathbb{C}\langle x_1,\dots,x_n\rangle ).$$ We want to apply Theorem \ref{el} to $q$. So let $W_j\in\mathbb{M}_{d,r}(\mathbb{C})$ such that $\sum_j W_j^*B_iW_j =0$ for all $i=1,\dots,m$ and let $k$ be the rank of the matrix $\sum_j W_j^*W_j$. Then there is an invertible matrix $U\in\mathbb{M}_r(\mathbb{C})$ with
\[
U^*\left(\sum_j W_j^*W_j\right)U = \left(
\begin{matrix}
I_k & 0\\
0 & 0
\end{matrix}
\right)=: P.
\]
Define
\[
V:=\left(
\begin{matrix}
I_k\\
0
\end{matrix}
\right)\in\mathbb{M}_{r,k}(\mathbb{C})
\]
and $V_j:=W_jUV$ for all $j$. Then we have
\[
\sum_j V_j^*B_iV_j = V^*U^*\left(\sum_j W_j^*B_iW_j\right)UV =0
\]
for all $i=1,\dots,m$ and
\[
\sum_j V_j^*V_j = V^*U^*\left(\sum_j W_j^*W_j\right)UV = V^*PV= I_k.
\]
It follows that the  $V_j$ fulfill the assumptions from (ii), and therefore
\[
\sum_j q_{V_j}(T_1,\dots,T_n) =\sum_j p_{V_j}(T_1,\ldots,T_n) -\epsilon \cdot \underbrace{\sum_j V_j^*V_j}_{I_k}\otimes {\rm id}_{\mathcal H}\geqslant 0.
\]
Now we claim that $W_jUP=W_jU$ for all $j$. To prove this, let $x\in\mathbb{C}^r$, $x_1=Px$ and $x_2=x-x_1$. Then we have $W_jUPx=W_jUx_1=W_jUx-W_jUx_2$. Hence, it suffices to show that $W_jUx_2=0$. We compute
\begin{eqnarray*}
&&\sum_j \langle W_jUx_2, W_jUx_2\rangle \\
&=&\left\langle U^*\left(\sum_j W_j^*W_j\right)Ux_2,x_2 \right\rangle = \langle Px_2,x_2 \rangle = \langle Px-P^2x, x_2 \rangle = 0.
\end{eqnarray*}
Since every summand on the left hand side is nonnegative, we indeed get $W_jUx_2=0$ and therefore $W_jUP=W_jU$ for all $j$. With this in hand we can further compute
\begin{align*}
\sum_j q_{W_jU}(T_1,\dots,T_n) &= \sum_j q_{W_jUP}(T_1,\dots,T_n)\\
&= (V\otimes {\rm id}_{\mathcal H})\left(\sum_j q_{V_j}(T_1,\dots,T_n)\right)(V\otimes {\rm id}_{\mathcal H})^*\\
& \geqslant 0.
\end{align*}
Since we have chosen $U$ to be invertible, we also get $$\sum_j q_{W_j}(T_1,\dots,T_n) \geqslant 0.$$
We have thus checked condition (ii)  from Theorem~\ref{el}, and can conclude that there are $S_1,\dots,S_m\in\mathbb{B}(\mathcal{H})_h$ such that
\[
q(T_1,\dots,T_n)+B_1\otimes S_1+\dots+B_m\otimes S_m \geqslant 0.
\]
This is equivalent to
\[
p(T_1,\dots,T_n)+B_1\otimes S_1+\dots+B_m\otimes S_m \geqslant \varepsilon\cdot {\rm id}_d\otimes {\rm id}_{\mathcal H} > 0,
\] the desired result.
\end{proof}

Now let us start considering the case that $S$ is neither definite nor indefinite.

\begin{lem} \label{elpsd} Let $p\in\mathbb M_d(\mathbb C\langle x_1,\ldots,x_n\rangle)_h$ and let $B\in\mathbb M_d(\mathbb C)_h$ be semidefinite. Let the columns  of $W\in\mathbb M_{d,r}(\mathbb C)$ form a basis of ${\rm ker}(B)$. Then for any Hilbert space $\mathcal H$ and any choice $T_1,\ldots, T_n\in\mathbb B(\mathcal H)_h$, the following are equivalent: 
\begin{itemize}\item[(i)] $\exists S\in \mathbb B(\mathcal H)_h$ such that $p(T_1,\ldots, T_n)+ B\otimes S > 0.$\item[(ii)] $p_{W}(T_1,\ldots, T_n) > 0.$ \end{itemize} 
\end{lem}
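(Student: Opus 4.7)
The plan is to treat the two directions separately. For the easy direction $(i)\Rightarrow (ii)$, I will use the elementary identity
\[ p_W(T_1,\ldots,T_n) = (W^{*}\otimes I_{\mathcal H})\, p(T_1,\ldots,T_n)\,(W\otimes I_{\mathcal H}). \]
If $p(T)+B\otimes S\geqslant \varepsilon\,I_d\otimes I_{\mathcal H}$ for some $\varepsilon>0$, then conjugating with $W\otimes I_{\mathcal H}$ and using $BW=0$ makes the $B\otimes S$ contribution vanish, leaving $p_W(T)\geqslant \varepsilon\,(W^{*}W)\otimes I_{\mathcal H}$. Since the columns of $W$ are linearly independent, $W^{*}W$ is strictly positive definite, so $p_W(T)>0$.

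For the converse $(ii)\Rightarrow (i)$, my strategy is first to put $(p,B,W)$ into a normal form by congruence, and then to produce $S$ explicitly via a Schur-complement argument. Replacing $B$ by $-B$ (and $S$ by $-S$) if necessary, I may assume $B\geqslant 0$. As already exploited in the proof of Theorem \ref{el}, both conditions are preserved under $(p,B)\mapsto (p_A,A^{*}BA)$ for invertible $A\in \mathbb M_d(\mathbb C)$, provided $W$ is simultaneously replaced by the corresponding basis $A^{-1}W$ of $\ker(A^{*}BA)$. Moreover, condition $(ii)$ does not depend on the specific basis chosen: if $W'=WQ$ for invertible $Q$, then $p_{W'}(T)=Q^{*}p_W(T)Q$. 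Diagonalising $B$ by a unitary and relabelling basis vectors, I may therefore assume
\[ B=\begin{pmatrix}0_r & 0\\ 0 & B_2\end{pmatrix}, \qquad W=\begin{pmatrix}I_r\\ 0\end{pmatrix}, \qquad B_2>0. \]

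In this normal form I will write $p(T)$ as a $2\times 2$ block operator
\[ p(T)=\begin{pmatrix}p_{11} & p_{12}\\ p_{12}^{*} & p_{22}\end{pmatrix}, \]
so that $p_W(T)=p_{11}$. Hypothesis $(ii)$ then gives $p_{11}\geqslant \delta\, I$ for some $\delta>0$, so $p_{11}^{-1}$ exists as a bounded operator. I will try the ansatz $S:=\lambda\cdot I_{\mathcal H}$ with a large $\lambda>0$ to be determined. Then $B\otimes S=\operatorname{diag}(0,\lambda\, B_2\otimes I_{\mathcal H})$, and by the operator Schur-complement criterion, strict positivity of $p(T)+B\otimes S$ is equivalent to strict positivity of
\[ p_{22}+\lambda\,B_2\otimes I_{\mathcal H}-p_{12}^{*}\,p_{11}^{-1}\,p_{12}. \]
Since $B_2\geqslant c\cdot I$ for some $c>0$ and the other summand is a fixed bounded self-adjoint operator on $\mathcal H^{\oplus(d-r)}$, any $\lambda>\|p_{22}-p_{12}^{*}p_{11}^{-1}p_{12}\|/c$ does the job.

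The only step requiring a little care is the operator Schur-complement reduction: one needs both ``diagonal blocks'' appearing in the $LDL^{*}$-type factorisation of $p(T)+B\otimes S$ to be bounded below on $\mathcal H^{\oplus d}$, and this is exactly what $p_{11}\geqslant \delta I$ and the lower bound on $\lambda$ jointly provide. Everything else is elementary congruence bookkeeping together with the explicit choice of $\lambda$.
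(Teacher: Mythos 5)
Your proof is correct and follows essentially the same route as the paper: with the ansatz $S=\lambda\cdot\operatorname{id}$, the crux in both cases is to show that a $2\times 2$ block operator becomes bounded below for $\lambda$ large. Where you invoke the operator Schur complement after spelling out the normal-form congruence explicitly, the paper delegates the same block-positivity fact to its Lemma~\ref{ext}, which it proves by a direct quadratic-form estimate; the two are interchangeable.
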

\begin{proof}
Again (i)$\Rightarrow$(ii) is obvious, and with $S:=\lambda\cdot{\rm id}$ the direction (ii)$\Rightarrow$(i) is an immediate corollary of Lemma \ref{ext} from the Appendix. \end{proof}

The following theorem is Proposition \ref{pos}, but without any assumption on $\operatorname{span}\{B_1,\dots,B_m\}$: 

\begin{thm}\label{pos2}
Let $p\in\mathbb{M}_d(\mathbb{C}\langle x_1,\dots,x_n \rangle )_h$ and let $B_1,\dots,B_m\in\mathbb{M}_d(\mathbb{C})_h$. Then for any Hilbert space $\mathcal{H}$ and any choice $T_1,\dots,T_n\in\mathbb{B}(\mathcal{H})_h$, the following are equivalent:
\begin{itemize}
\item[(i)] $\exists S_1,\dots,S_m\in\mathbb{B}(\mathcal{H})_h$ such that $p(T_1,\dots,T_n)+B_1\otimes S_1+\dots+ B_m\otimes S_m >0.$
\item[(ii)] There is some $\varepsilon >0,$ such that for all $ r\leq \max\{{\rm dim}(\mathcal H),d\}$ and all $V_j\in\mathbb{M}_{d,r}(\mathbb{C})$ with $\sum_j V_j^*B_iV_j=0$ for all $i=1,\dots,m$ and $\sum_j V_j^*V_j={\rm id}_r$ we have
\[
\sum_j p_{V_j}(T_1,\dots,T_n) \geqslant \varepsilon\cdot  {\rm id}_r\otimes{\rm id}_{\mathcal H}.
\]
\end{itemize}
\end{thm}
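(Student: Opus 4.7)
My plan is to induct on $d$, reducing the statement to Proposition~\ref{pos} after peeling off any positive semidefinite (non-definite) direction contained in $S:=\operatorname{span}\{B_1,\dots,B_m\}$. The direction (i)$\Rightarrow$(ii) needs no new idea: given $S_i$'s with $p(T)+\sum_iB_i\otimes S_i\geqslant\varepsilon\cdot{\rm id}_d\otimes{\rm id}_\mathcal H$, conjugating by the $V_j$'s and using the relations $\sum_jV_j^*B_iV_j=0$, $\sum_jV_j^*V_j={\rm id}_r$ gives (ii) with the same $\varepsilon$. For (ii)$\Rightarrow$(i) I would split into cases. If $S$ is definite, I would choose the $S_i$ so that $\sum_iB_i\otimes S_i$ dominates $-p(T)$; this gives (i) outright. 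If $S$ is indefinite, Proposition~\ref{pos} converts (ii), restricted to $r\leq\dim\mathcal H$, directly into (i).

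The remaining case is when $S$ contains a nonzero positive semidefinite element $B=\sum_i c_iB_i$ with nontrivial kernel (replacing $B$ by $-B$ if necessary). I would pick $W\in\mathbb M_{d,r}(\mathbb C)$ whose columns form an orthonormal basis of $\ker B$ and pass to the compressed data $\tilde p:=p_W$ and $\tilde B_i:=W^*B_iW\in\mathbb M_r(\mathbb C)_h$, with $r=\dim\ker B<d$. The key observation is that condition~(ii) for $(p,B_i)$ implies the analogous condition for $(\tilde p,\tilde B_i)$: any admissible tuple $(\tilde V_j)$ for the compressed problem lifts via $V_j:=W\tilde V_j$ to an admissible tuple for the original problem, since $W^*W={\rm id}_r$ gives $\sum_jV_j^*B_iV_j=\sum_j\tilde V_j^*\tilde B_i\tilde V_j=0$ and $\sum_jV_j^*V_j=\sum_j\tilde V_j^*\tilde V_j$, while the identity $p_{V_j}=\tilde p_{\tilde V_j}$ follows from the very definition of $p_V$. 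The induction hypothesis applied in dimension $r<d$ then produces $S_1,\dots,S_m\in\mathbb B(\mathcal H)_h$ with $\tilde p(T)+\sum_i\tilde B_i\otimes S_i>0$.

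The main obstacle is lifting this strict inequality from the $\ker B$-block back to the entire space $\mathbb C^d\otimes\mathcal H$, and here the semidefinite element $B$ plays the essential role. Relative to the orthogonal decomposition $\mathbb C^d=\ker B\oplus(\ker B)^\perp$, the tensor $B\otimes{\rm id}_\mathcal H$ vanishes on the first summand and is strictly positive on the second. Replacing $S_i$ by $S_i':=S_i+\lambda c_i\cdot{\rm id}_\mathcal H$ adds $\lambda\cdot B\otimes{\rm id}_\mathcal H$ to $p(T)+\sum_iB_i\otimes S_i$; this leaves the already strictly positive $\ker B$-block unchanged, does not touch the off-diagonal blocks (since $W^*B=0$), and makes the $(\ker B)^\perp$-block arbitrarily large. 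A Schur-complement argument for $\lambda$ large enough then gives $p(T)+\sum_iB_i\otimes S_i'>0$, which is precisely~(i). The size bound $r\leq\max\{\dim\mathcal H,d\}$ in (ii) is preserved by the induction because $d$ strictly decreases at each compression step, and at the base of the recursion the bound $r\leq\dim\mathcal H$ coming from Proposition~\ref{pos} lies within the required range.
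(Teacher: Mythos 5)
Your proof is correct and follows essentially the same route as the paper's: dispose of the definite case directly, reduce the indefinite case to Proposition~\ref{pos}, and otherwise compress along $\ker B$ for a nonzero semidefinite $B\in S$, apply the induction hypothesis to the compressed data $p_W$ and $W^*B_iW$, and lift back by adding $\lambda B\otimes\operatorname{id}$ and invoking a Schur-complement bound (the paper packages this last step as Lemma~\ref{elpsd}, which rests on Lemma~\ref{ext}). The only cosmetic difference is the induction variable: you induct on the ambient size $d$, noting $\dim\ker B<d$, whereas the paper inducts on $\dim S$, noting that $B$ lies in the kernel of the surjection $S\to V^*SV$ so that $\dim V^*SV<\dim S$; both quantities decrease strictly under the same compression and the two versions of the argument are interchangeable.
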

\begin{proof}
The proof that (i) implies (ii) is the same as in Proposition~\ref{el>}. For the converse, set $S=\operatorname{span}(B_1,\dots,B_m)$ and let $k=\dim S$. We will prove the implication from (ii) to (i) by induction over $k$. For $k=0$ this is obvious, by choosing $V={\rm id}_d$. Now assume $k>0$. If $S$ is definite, then we can find $\lambda_1,\dots,\lambda_m\in\mathbb{R}$ such that $\lambda_1B_1+\dots+\lambda_mB_m>0$. If we set $S_i=\lambda\cdot\lambda_i \operatorname{id}_{\mathcal{H}}$ for $\lambda\in\mathbb{R}$ large enough, then we get (i). If $S$ is indefinite, then (i) follows from Proposition~\ref{el>}. Now assume, that $S$ is neither definite nor indefinite, and let $B$ be a nonzero psd element of $S$. Further let $V\in\mathbb{M}_{d,r}(\mathbb{C})$ be a matrix such that the columns of $V$ form an orthonormal basis of $\ker(B)$. Set $q:= p_{V}\in\mathbb{M}_r(\mathbb{C}\langle x_1,\dots,x_n \rangle )_h$. Now for all $t\leq \max\{{\rm dim}(\mathcal H),r\}$ and all $V_j\in\mathbb{M}_{r,t}(\mathbb{C})$ with $\sum_j V_j^*(V^*B_iV)V_j =0$ for all $i=1,\dots,m$ and $\sum_j V_j^*V_j =I_t$ we have
\[
\sum_j q_{V_j}(T_1,\dots,T_n) = \sum_j p_{VV_j}(T_1,\dots,T_n) \geqslant \varepsilon\cdot {\rm id}_d\otimes{\rm id}_{\mathcal H},
\]
where the inequality follows from assumption (ii). Since $B$ is in the kernel of the surjective linear map
\[
S\to V^*SV, M\mapsto V^*MV,
\]
we have $\dim(V^*SV)<\dim S=k$. By induction hypothesis we find $S_1,\dots,S_m\in\mathbb{B}(\mathcal{H})_h$ such that
\begin{align*}
&\quad (V^*\otimes\operatorname{id}_{\mathcal{H}})(p(T_1,\dots,T_n)+B_1\otimes S_1+\dots+B_m\otimes S_m)(V\otimes \operatorname{id}_{\mathcal{H}})\\
&= q(T_1,\dots,T_n)+V^*B_1V\otimes S_1+\dots+V^*B_mV\otimes S_m >0.
\end{align*}
By Lemma~\ref{elpsd} there is an $R\in\mathbb{B}(\mathcal{H})_h$ such that
\[
p(T_1,\dots,T_n)+B_1\otimes S_1+\dots+B_m\otimes S_m +B\otimes R >0.
\]
Since $B\in S$, we find $\lambda_1,\dots,\lambda_m\in\mathbb{R}$ such that $B=\lambda_1B_1+\dots+\lambda_mB_m$. Now we have
\begin{align*}
&\quad p(T_1,\dots,T_n)+B_1\otimes(S_1+\lambda_1 R)+\dots+B_m\otimes (S_m+\lambda_m R)\\
&= p(T_1,\dots,T_n)+B_1\otimes S_1+\dots+B_m\otimes S_m+ B\otimes R >0.
\end{align*}
This proves (i).
\end{proof}

\begin{rem}
 It is not clear whether we can get rid of the assumption on  $\operatorname{span}\{B_1,\dots,B_m\}$ in Theorem \ref{el} as well. There is one obvious way to proceed. If a tuple $(T_1,\ldots,T_n)$ fulfills (ii) in Theorem \ref{el}, one can try  to approximate it by a tuple that even fulfills (ii) from Theorem \ref{pos2}, and thus  obtain (i) from Theorem \ref{pos2} for the approximation. So the set defined by (i)  in Theorem \ref{el} is at least dense in the one defined by (ii), in a suitable sense. One example is the following statement, which holds for {\it free spectrahedrops}.
\end{rem}

\begin{cor}
Let $A_1,\ldots,A_n,B_1,\ldots,B_m\in\mathbb M_d(\mathbb C)_h$ be such that  $e_1A_1+\cdots +e_nA_n={\rm id}_d$ for some point $e\in\mathbb R^n$. Let $\mathcal H$ be a Hilbert space and assume $T_1,\ldots,T_n\in\mathbb B(\mathcal H)_h$ fulfill the following condition:
\begin{itemize}
\item[] For all $r\in\mathbb N$ with $r\leq \max\{ {\rm dim}(\mathcal H),d\}$ and all $W_j\in \mathbb M_{d,r}(\mathbb C)$ with $ \sum_j W_j^*B_iW_j=0$ for $i=1,\ldots,m$ we have $$\sum_{j} p_{W_j}(T_1,\ldots, T_n) \geqslant 0.$$ \end{itemize} 
Then for all $\varepsilon >0$ there exist $S_1,\ldots,S_m\in\mathbb B(\mathcal H)_h$ such that $$A_1\otimes T_1+\cdots +A_n\otimes T_n +B_1\otimes S_1+\cdots +B_m\otimes S_m\geqslant-\varepsilon\cdot  {\rm id}_d\otimes{\rm id}_{\mathcal H}.$$
\end{cor}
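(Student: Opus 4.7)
The plan is to reduce the corollary to Theorem~\ref{pos2} by a perturbation trick that exploits the assumption $\mathrm{id}_d=\sum_i e_i A_i$. Set
\[
p := A_1 \otimes x_1 + \cdots + A_n \otimes x_n \in \mathbb{M}_d(\mathbb{C}\langle x_1,\dots,x_n \rangle)_h,
\]
and for a fixed $\varepsilon > 0$ define the shifted Hermitian tuple $T_i' := T_i + \varepsilon e_i \cdot \mathrm{id}_{\mathcal H}$. The linearity of $p$ together with the identity $\sum_i e_i A_i = \mathrm{id}_d$ gives the key relation
\[
p(T'_1,\ldots,T'_n) \;=\; \sum_i A_i \otimes T_i \;+\; \varepsilon\,\mathrm{id}_d \otimes \mathrm{id}_{\mathcal H}.
\]
Thus the desired inequality $\sum_i A_i \otimes T_i + \sum_i B_i \otimes S_i \geqslant -\varepsilon\,\mathrm{id}_d \otimes \mathrm{id}_{\mathcal H}$ is exactly the (non-strict) positivity of $p(T') + \sum_i B_i \otimes S_i$, and it would even suffice to produce strict positivity of this expression.

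Next I would verify that $(T_1',\dots,T_n')$ satisfies hypothesis (ii) of Theorem~\ref{pos2} for the pencil $p$ with the explicit constant $\varepsilon$. Since $p$ is linear, for any $V_j \in \mathbb{M}_{d,r}(\mathbb{C})$ a direct computation gives
\[
p_{V_j}(T') \;=\; \sum_i V_j^* A_i V_j \otimes T_i' \;=\; p_{V_j}(T) \;+\; \varepsilon\, V_j^* \Bigl( \sum_i e_i A_i \Bigr) V_j \otimes \mathrm{id}_{\mathcal H} \;=\; p_{V_j}(T) + \varepsilon\, V_j^* V_j \otimes \mathrm{id}_{\mathcal H}.
\]
Summing over $j$ under the constraints $\sum_j V_j^* B_i V_j = 0$ for all $i$ and $\sum_j V_j^* V_j = \mathrm{id}_r$, the corollary's hypothesis applies (it is strictly weaker, imposing no normalization on the $V_j$) to give $\sum_j p_{V_j}(T) \geqslant 0$, and hence
\[
\sum_j p_{V_j}(T') \;\geqslant\; \varepsilon\,\mathrm{id}_r \otimes \mathrm{id}_{\mathcal H}.
\]
This is precisely condition (ii) of Theorem~\ref{pos2} for $p$ and $T'$, with the range $r \leq \max\{\dim \mathcal H, d\}$ matching on both sides.

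Invoking Theorem~\ref{pos2} then yields $S_1,\dots,S_m \in \mathbb{B}(\mathcal H)_h$ with $p(T') + \sum_i B_i \otimes S_i > 0$, and substituting back via the key relation produces the strict inequality $\sum_i A_i \otimes T_i + \sum_i B_i \otimes S_i > -\varepsilon\,\mathrm{id}_d \otimes \mathrm{id}_{\mathcal H}$, which is stronger than what was asked. I do not anticipate a genuine obstacle here; the whole argument is essentially bookkeeping, the essential insight being that the normalization $\mathrm{id}_d = \sum_i e_i A_i$ allows one to ``purchase'' the uniform $\varepsilon$-gap required by Theorem~\ref{pos2} simply by translating each $T_i$ by $\varepsilon e_i \cdot \mathrm{id}_{\mathcal H}$. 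The only mild point to check is the compatibility of the range of $r$ in the two hypotheses, which is why the statement of the corollary uses $\max\{\dim\mathcal H,d\}$ rather than just $\dim\mathcal H$ as in Theorem~\ref{el}.
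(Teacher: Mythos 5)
Your proposal is correct and follows exactly the paper's argument: the paper also passes to the shifted tuple $T_i + \varepsilon e_i\,\mathrm{id}_{\mathcal H}$, observes that it satisfies condition (ii) of Theorem~\ref{pos2} with gap $\varepsilon$, and then reads off the conclusion from condition (i). You have merely written out the one-line verification that the paper declares ``clear.''
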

\begin{proof}
The tuple $(T_1+\varepsilon e_1{\rm id}_{\mathcal H},\ldots, T_1+\varepsilon e_1{\rm id}_{\mathcal H})$ clearly fulfills (ii) of Theorem \ref{pos2}. The statement follows from (i) in Theorem \ref{pos2} for this tuple.
\end{proof}

The elimination results above are all very special. They apply only to {\it separated and linear variables}.
We finish the section with some remarks on how to extend this to the non-linear case, where the elimination variables can even be mixed among themselves. For this let $p\in \mathbb M_d(\mathbb C\langle x_1,\ldots, x_n\rangle)_h$ and $q\in \mathbb M_d(\mathbb C\langle y_1,\ldots, y_m\rangle)_h$. As before, we want to classify for which self-adjoint operators $T_1,\ldots,T_n$ there exists $S_1, \ldots, S_m$ such that $$p(T_1,\ldots,T_n)+q(S_1,\ldots,S_m)\geqslant 0.$$ The simple idea now is to try to replace $q$ by something linear. So assume there are $B_0,\ldots, B_r\in \mathbb M_d(\mathbb C)_h, C_0,\ldots, C_r\in\mathbb M_k(\mathbb C)_h$, such that for any Hilbert space $\mathcal H$ we have \begin{align*}&\left\{ q(S_1,\ldots, S_m)\mid S_i\in\mathbb B(\mathcal H)_h\right\}\nonumber \\\label{re} = &\left\{ B_0\otimes {\rm id}_{\mathcal H} + \sum_{i=1}^r B_i\otimes R_i\mid R_i\in\mathbb B(\mathcal H)_h, C_0\otimes{\rm id}_{\mathcal H}+\sum_{i=1}^r C_i\otimes R_i\geqslant 0\right\}.\end{align*}
So we want to realize the image of $q$ as an affine linear image of a free spectrahedron. Then let $\tilde p\in\mathbb M_{d+k}(\mathbb C\langle x_1,\ldots,x_n\rangle)_h$ be defined as $$\tilde p=\left(\begin{array}{cc}P_e+B_0 & 0 \\0 & C_0\end{array}\right)\otimes e + \sum_{\omega\neq e} \left(\begin{array}{cc}P_\omega & 0 \\0 & 0\end{array}\right)\otimes\omega.$$ It is now straightforward to check that the following are equivalent, for any Hilbert space $\mathcal H$ and any choice $T_1,\ldots, T_n\in\mathbb B(\mathcal H)_h$:
\begin{itemize}
\item[(i)] $\exists S_1,\ldots, S_m\in\mathbb B(\mathcal H)_h$ such that $p(T_1,\ldots, T_n)+ q(S_1,\ldots, S_m)\geqslant 0$
\item[(ii)] $\exists R_1,\ldots, R_r\in\mathbb B(\mathcal H)_h$ such that $$\tilde{p}(T_1,\ldots, T_n)+ \sum_{i=1}^r \left(\begin{array}{cc}B_i & 0 \\0 & C_i\end{array}\right) \otimes R_i \geqslant 0.$$ \end{itemize} In condition (ii) we can now eliminate the existential quantifiers with the above results.

\begin{ex} (i) The construction applies for  example in the case $q=\sum_{i=1}^m B_i\otimes f_i(y_i)$ for some $B_i\in\mathbb M_d(\mathbb C)_h$ and $f_i\in\mathbb R[y].$ If for example $f_i(\mathbb R)=[a_i,\infty)$, then 
\begin{eqnarray*}
&&\left\{ q(S_1,\ldots, S_m)\mid S_i\in\mathbb B(\mathcal H)_h\right\}
\\&=&\left\{ \sum_{i=1}^m B_i\otimes R_i\mid R_i\in\mathbb B(\mathcal H)_h, R_i\geqslant a_i\cdot{\rm id}\right\},
\end{eqnarray*} 
using the functional calculus of bounded self-adjoint operators. The other possibilities on $f_i$ are similar.

(ii) The construction also applies in the case that $q=B\otimes \omega$ is a single term. If one variable appears in $\omega$ with odd degree, then the set $\{ q(S_1,\ldots, S_m)\mid S_i\in\mathbb B(\mathcal H)_h\}$ coincides with $\{ B\otimes R\mid R\in\mathbb B(\mathcal H)_h\}$. Otherwise, it coincides with  $\{ B\otimes R\mid R\in\mathbb B(\mathcal H)_h, R\geqslant 0\}.$

(iii) Assume $q=B\otimes\omega + B^*\otimes \omega^*,$ where the word $\omega$ provides a surjective map $$\omega\colon \mathbb B(\mathcal H)_h^m \twoheadrightarrow \mathbb B(\mathcal H)$$ for all Hilbert spaces. We then clearly have 
\begin{eqnarray*}&&\{ q(S_1,\ldots, S_m)\mid S_i\in\mathbb B(\mathcal H)_h\}\\
&=&\{ (B+B^*)\otimes R_1 + i(B-B^*)\otimes R_2\mid R_i\in\mathbb B(\mathcal H)_h\},\end{eqnarray*} and the above construction applies.
\end{ex}

\section{Appendix}

\begin{lem}
\label{closedcone}
Let $V$ be a finite dimensional vector space and let $A,B$ be two closed cones in $V$ such that $(-A)\cap B=\{0\}$. Then the cone $A+B$ is closed.
\end{lem}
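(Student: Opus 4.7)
The plan is to proceed by a sequence argument exploiting finite-dimensionality. Suppose $(c_n)_{n\in\mathbb N}$ is a sequence in $A+B$ converging to some $c\in V$, and write $c_n = a_n + b_n$ with $a_n\in A$ and $b_n\in B$. The goal is to produce a decomposition $c=a+b$ with $a\in A$, $b\in B$; the obstruction to doing this directly is that the sequences $(a_n)$ and $(b_n)$ need not be bounded even though their sum is.

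First I would dispose of the easy case: if both $(a_n)$ and $(b_n)$ are bounded, then finite-dimensionality lets us pass to a common subsequence along which $a_n\to a$ and $b_n\to b$. Closedness of $A$ and $B$ gives $a\in A$ and $b\in B$, and continuity of addition yields $c=a+b\in A+B$.

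The main work is the unbounded case. Suppose, after passing to a subsequence, that $\|a_n\|\to\infty$ (if only $\|b_n\|\to\infty$ the same argument works by symmetry; and since $a_n+b_n$ is bounded, one norm being unbounded forces the other). The key idea is to rescale by $\|a_n\|$ so as to land on the unit sphere: set $\hat a_n := a_n/\|a_n\|$ and $\hat b_n := b_n/\|a_n\|$. Because $A$ and $B$ are cones, $\hat a_n\in A$ and $\hat b_n\in B$. By compactness of the unit sphere in $V$, a subsequence of $(\hat a_n)$ converges to some $\hat a\in A$ with $\|\hat a\|=1$. Since $\hat a_n+\hat b_n=c_n/\|a_n\|\to 0$, along this subsequence we also have $\hat b_n\to -\hat a$, and closedness of $B$ gives $-\hat a\in B$. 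Hence $-\hat a\in(-A)\cap B$, which by hypothesis forces $\hat a=0$, contradicting $\|\hat a\|=1$.

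The only subtle point — and the one place care is needed — is ensuring that after rescaling the sequences still belong to $A$ and $B$; this is exactly where the cone property is used, and it is also where the finite-dimensionality hypothesis enters, via compactness of the unit sphere. No deeper obstacle appears, so the argument goes through as sketched.
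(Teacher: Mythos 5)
Your proof is correct and uses essentially the same idea as the paper's: normalize the unbounded sequence by its norm, use compactness of the unit sphere in a finite-dimensional space to extract a convergent subsequence, and derive a contradiction with $(-A)\cap B=\{0\}$ since the limit is a nonzero element of that intersection. The paper packages the rescaling argument as an auxiliary claim (if $C$ is bounded then $(C-A)\cap B$ is bounded) and then applies it to $C=\{x_1,x_2,\dots\}$, whereas you run the dichotomy (bounded vs.\ unbounded) directly on the sequence $(a_n)$; this is a presentational difference only, and your more streamlined version goes through.
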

\begin{proof}
First, we will prove the following more general result: If $C$ is a bounded subset of $V$, then the sets $(C-A)\cap B$ and $(C-B)\cap A$ are bounded, too. We will prove this by contradiction. Assume $(C-A)\cap B$ is not bounded. Then there is a sequence $b_n\in (C-A)\cap B$ such that $\lVert b_n\rVert\geq n$ for all $n$. Further there are sequences $a_n$ in $A$ and $c_n$ in $C$ such that $b_n=c_n-a_n$. Since $V$ is finite dimensional and since the sequence $\lVert b_n\rVert^{-1}b_n$ is obviously bounded, there is a convergent subsequence $\lVert b_{n_k}\rVert^{-1}b_{n_k}$. Because $C$ is a bounded set, the sequence $c_{n_k}$ is bounded, too. Therefore $\lVert b_{n_k}\rVert^{-1}c_{n_k}$ converges to $0$. Because $A$ and $B$ are closed, the sequence $\lVert b_{n_k}\rVert^{-1}b_{n_k}=\lVert b_{n_k}\rVert^{-1}c_{n_k}-\lVert b_{n_k}\rVert^{-1}a_{n_k}$ converges to an element in $B$ (left hand side) and to an element in $-A$ (right hand side). Since $(-A)\cap B=\{0\}$ the sequence $\lVert b_{n_k}\rVert^{-1}b_{n_k}$ must converge to $0$. On the other hand the limit of this sequence must have norm $1$. A contradiction. So we have proved, that $(C-A)\cap B$ and by symmetry $(C-B)\cap A$ are bounded, if $C$ is bounded.

Now let $x_n$ be a sequence in $A+B$, which converges in $V$. Then there are sequences $a_n$ in $A$ and $b_n$ in $B$ such that $x_n=a_n+b_n$. Define $C=\{x_1,x_2,\dots\}$. By the result above the sequence $b_n=x_n-a_n\in (C-A)\cap B$ is bounded. Thus, it has a convergent subsequence $b_{n_k}$. Now every of the sequences $x_{n_k},a_{n_k}$ and $b_{n_k}$ is convergent. Since both $A$ and $B$ are closed, the limits of $a_{n_k}$ and $b_{n_k}$ are in $A$ and $B$, respectively. Therefore, the limit of $x_{n_k}$ and, thus, the limit of $x_n$ is in $A+B$.
\end{proof}

\begin{lem}
\label{ortho}
Let $S$ be a subspace of $\mathbb{M}_d(\mathbb{C})_h$, then $S$ is definite if and only if $S^{\perp}$ is indefinite.
\end{lem}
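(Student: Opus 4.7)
The plan is to prove the two directions separately. The easy direction uses a single trace inequality; the harder converse relies on Hahn--Banach separation combined with the self-duality of the positive semidefinite cone under the trace inner product.

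For the forward direction, assume $S$ contains a definite element $A$, which after negation we may take to be positive definite. If there existed a nonzero positive semidefinite $B\in S^{\perp}$, then since $A,B$ are Hermitian
\[
\langle A,B\rangle = \operatorname{tr}(BA)=\operatorname{tr}(A^{1/2}BA^{1/2})>0,
\]
using that $A$ is invertible and $B\neq 0$. This contradicts $B\in S^{\perp}$. The case of a nonzero negative semidefinite $B\in S^{\perp}$ is identical after a sign change, and the case of $A$ negative definite is symmetric. Hence every nonzero element of $S^{\perp}$ is indefinite.

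For the converse I would argue by contrapositive: assume $S$ contains no definite element, and construct a nonzero semidefinite element of $S^{\perp}$. The open convex cone $\mathcal{P}^{\circ}\subseteq\mathbb{M}_d(\mathbb{C})_h$ of positive definite matrices is disjoint from the subspace $S$ by assumption. Applying the Hahn--Banach separation theorem in the finite-dimensional real vector space $\mathbb{M}_d(\mathbb{C})_h$ yields a nonzero $\mathbb{R}$-linear functional $f$ that vanishes on $S$ and is nonnegative on $\mathcal{P}^{\circ}$; by continuity, $f\geq 0$ on the whole closed cone $\mathcal{P}$ of positive semidefinite matrices. Representing $f=\langle \cdot,B\rangle$ via the inner product for some nonzero Hermitian $B$, the self-duality of $\mathcal{P}$ under $\langle\cdot,\cdot\rangle$ forces $B$ to be positive semidefinite. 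Since $f$ vanishes on $S$, we have $B\in S^{\perp}$, showing that $S^{\perp}$ contains a nonzero semidefinite element and is therefore not indefinite.

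The only step that requires any real care is the application of Hahn--Banach separation in the form above (the interior of $\mathcal{P}$ is needed to keep the separated set open and convex) together with the self-duality of the psd cone; both are standard but deserve to be spelled out. Everything else reduces to the elementary identity $\operatorname{tr}(AB)=\operatorname{tr}(A^{1/2}BA^{1/2})$ for $A\geq 0$.
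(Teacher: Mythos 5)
Your forward direction is essentially the same computation as the paper's: both reduce to observing that for $X>0$ and $Y\geq 0$ Hermitian, $\langle X,Y\rangle=\operatorname{tr}(X^{1/2}YX^{1/2})>0$ unless $Y=0$, so an indefinite element of $S^\perp$ cannot be semidefinite. For the converse, however, you take a genuinely different and noticeably cleaner route. The paper argues directly from ``$S^\perp$ indefinite'' by forming the conic hull $C$ of $S^\perp$ and the psd cone, invoking Lemma~\ref{closedcone} (whose hypothesis $(-A)\cap B=\{0\}$ is exactly where indefiniteness of $S^\perp$ is used) to get closedness of $C$, then separating the exposed face $C\cap(-C)=S^\perp$ to produce a functional $f$, and finally doing some work to show that a generator of $\ker(f)^\perp$ is a definite element of $S$. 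You instead prove the contrapositive: if $S$ meets neither $\mathcal P^\circ$ nor $-\mathcal P^\circ$, separate the subspace $S$ from the open cone $\mathcal P^\circ$ by Hahn--Banach, note the separating functional vanishes on the subspace $S$ and is nonnegative on $\mathcal P$, and then use self-duality of the psd cone to conclude its Riesz representative $B$ is a nonzero psd element of $S^\perp$. This sidesteps Lemma~\ref{closedcone} entirely (which is used nowhere else in the paper) and avoids the exposed-face and kernel-of-$X$ bookkeeping; the trade-off is that it gives less explicit information about a definite element of $S$, which the paper's proof produces but does not actually need. One tiny point worth spelling out when you write this up: the separating hyperplane theorem gives $f\leq\alpha$ on $S$ and $f\geq\alpha$ on $\mathcal P^\circ$ (up to replacing $f$ by $-f$); that $f$ vanishes on $S$ follows from $S$ being a subspace, and then $\alpha\geq 0$ and $f\geq 0$ on $\overline{\mathcal P^\circ}=\mathcal P$.
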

\begin{proof}
$\Rightarrow$: If $S$ is definite, then there exists $X\in S$ such that $X>0$. Now let $Y$ be an element of $S^{\perp}$ such that $Y\geq 0$. Then we can compute
\[
\langle \sqrt{Y}\sqrt{X},\sqrt{Y}\sqrt{X} \rangle = \operatorname{tr}(\sqrt{X}Y\sqrt{X}) = \operatorname{tr}(YX) = \langle X,Y \rangle = 0.
\]
This implies $\sqrt{Y}\sqrt{X}=0$, and since $X$ is invertible, we get $\sqrt{Y}=0$, hence, $Y=0$. Thus, every Element in $S^{\perp}\backslash\{0\}$ is indefinite.

$\Rightarrow$: Let $C$ be the conic hull of $S^{\perp}$ and all psd matrices. Then $C$ is a closed convex cone by Lemma~\ref{closedcone} and $C\cap (-C)=S^{\perp}$. Since $C\cap(-C)$ is an exposed face of $C$, there is a linear functional $f$ on $\mathbb{M}_s(\mathbb{C})^h$, which is positive on $C\backslash(C\cap(-C))=C\backslash S^{\perp}$ and vanishes on $C\cap(-C)=S^{\perp}$. Note that all nonzero psd matrices are in $C\backslash(C\cap (-C))$. Hence, $\ker(f)$ is also an indefinite subspace and $S^{\perp}\subseteq\ker(f)$. Further observe that $\ker(f)^{\perp}$ is one dimensional. Thus, we can choose a generator $X$ of $\ker(f)^{\perp}\subseteq S$. Assume $X$ would be indefinite. Then there is obviously a $Y\neq 0$ such that
\[
0=\operatorname{tr}(YXY)=\operatorname{tr}(Y^2X)=\langle X,Y^2 \rangle.
\]
This implies $Y^2\in \{X\}^{\perp}=\ker(f)$. Because $\ker(f)$ is indefinite and $Y^2$ is non-zero and psd, this is a contradiction. Now let $P$ be the projection onto the kernel of $X$. Then we have $XP=0$ and, hence,
\[
0=\operatorname{tr}(XP)=\langle P,X\rangle.
\]
So $P$ is a psd element of the indefinite subspace $\{X\}^{\perp}=\ker(f)$, which implies $P=0$. Hence, $X$ must be a definite element of $S$.
\end{proof}

\begin{lem}\label{cara}
For any $d\in\mathbb N$ there exists $t\in \mathbb N$, such that for  any finite choice $W_1,\ldots,W_N\in\mathbb M_d(\mathbb C)$ there are $V_1,\ldots,V_t\in\mathbb M_d(\mathbb C)$ with $$\sum_{i=1}^N W_i^*MW_i =\sum_{i=1}^t V_i^*M V_i$$ for all $M\in\mathbb M_d(\mathbb C)$. In fact we can choose $t=2s^4$ and the $V_i$ as positive multiples of some of the $W_i$.  \end{lem}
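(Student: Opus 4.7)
The plan is to recognize that the identity
\[
\sum_{i=1}^N W_i^* M W_i = \sum_{j=1}^t V_j^* M V_j \qquad \forall\, M \in \mathbb{M}_d(\mathbb{C})
\]
depends only on a single linear map, and then to apply a Carathéodory-type argument to compress the decomposition. Concretely, define $\Phi_W \in L := \operatorname{Hom}_\mathbb{C}(\mathbb{M}_d(\mathbb{C}), \mathbb{M}_d(\mathbb{C}))$ by $\Phi_W(M) := W^* M W$. The displayed identity is equivalent to $\sum_i \Phi_{W_i} = \sum_j \Phi_{V_j}$ inside $L$. Since $L$ has complex dimension $d^4$, it has real dimension $2d^4$.

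With this reformulation, $\sum_{i=1}^N \Phi_{W_i} \in L$ is a conic combination, with all coefficients equal to $1$, of the $N$ vectors $\Phi_{W_1}, \ldots, \Phi_{W_N}$. The conic version of Carathéodory's theorem, applied in the real vector space $L$ of dimension $2d^4$, then produces a subset $J \subseteq \{1,\ldots,N\}$ with $|J| \le 2d^4$ and strictly positive reals $\{\lambda_j\}_{j \in J}$ such that
\[
\sum_{i=1}^N \Phi_{W_i} = \sum_{j \in J} \lambda_j \Phi_{W_j}.
\]

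To conclude, one uses that the parametrization $W \mapsto \Phi_W$ is homogeneous of degree two, so $\lambda_j \Phi_{W_j} = \Phi_{\sqrt{\lambda_j}\, W_j}$. Setting $V_j := \sqrt{\lambda_j}\, W_j$ for $j \in J$ yields at most $t := 2d^4$ matrices, each a positive scalar multiple of one of the $W_i$, and the required identity $\sum_i W_i^* M W_i = \sum_{j \in J} V_j^* M V_j$ holds for every $M$. No real obstacle arises; the argument is essentially Carathéodory applied in the right ambient space. The only subtlety is that "linear map" here means complex-linear, which doubles the real dimension and produces the factor of two in the bound $2d^4$; one can in fact sharpen to $d^4$ by restricting to the $d^4$-dimensional real subspace of hermiticity-preserving maps, since every $\Phi_W$ preserves hermiticity.
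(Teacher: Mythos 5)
Your proof is correct and essentially identical to the paper's: encoding the quadratic form $W \mapsto W^*(\cdot)W$ either as a linear map $\Phi_W \in \operatorname{Hom}_{\mathbb C}(\mathbb M_d(\mathbb C),\mathbb M_d(\mathbb C))$ (your version) or as the tuple $(W^*E_1W,\ldots,W^*E_kW)\in\mathbb M_d(\mathbb C)^k$ for a fixed basis $E_1,\ldots,E_k$ of $\mathbb M_d(\mathbb C)$ (the paper's version) is the same identification, and both then apply conic Carath\'eodory in an ambient real space of dimension $2d^4$ and conclude by taking square roots of the coefficients. Your closing observation, that every $\Phi_W$ lies in the $d^4$-dimensional real subspace of $*$-preserving maps and so the bound improves to $t=d^4$, is correct and is a genuine sharpening of the paper's stated bound.
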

\begin{proof} Let $k$ be  the  dimension of $\mathbb M_d(\mathbb C)$ as a complex vectorspace and let $E_1, \ldots, E_k$ be a basis.  Let $t$ be the dimension of $\mathbb M_d(\mathbb C)^k$ as a real vector space. Thus $t=2d^4$.
Given $W_1,\ldots,W_N\in\mathbb M_d(\mathbb C)$, consider the convex cone $$C:={\rm cc}\left\{ (W_i^*E_1W_i,\ldots, W_i^*E_kW_i)\mid i=1,\ldots, N\right\} \subseteq \mathbb M_d(\mathbb C)^k.$$ By Caratheodory's Theorem, since  $$E:=\left(\sum_{i=1}^NW_i^*E_1W_i,\ldots, \sum_{i=1}^N W_i^*E_kW_i\right) \in C,$$ we find $\tilde W_1,\ldots,\tilde W_t$  (in fact among the $W_i$) and $\lambda_1,\ldots,\lambda_t\geq 0$ such that   $$E=\sum_{i=1}^t \lambda_{i} (\tilde W_i^*E_1\tilde W_i,\ldots, \tilde W_i^*E_k \tilde W_i).$$ With $V_i:=\sqrt{\lambda_i}\tilde W_i$, the result follows.
\end{proof}

\begin{lem}\label{ext}
Let $A\in\mathbb B(\mathcal H)_h, B\in\mathbb B(\mathcal K,\mathcal H)$ and $C\in\mathbb B(\mathcal K)_h$ be bounded operators on Hilbert spaces. Then the following are equivalent:
\begin{itemize}
\item[(i)] $\exists \lambda\in\mathbb R$ such that $\left(\begin{array}{cc}A & B \\B^* & C+\lambda\cdot{\rm id}_{\mathcal K}\end{array}\right)> 0$
\item[(ii)] $A > 0.$
\end{itemize}
\end{lem}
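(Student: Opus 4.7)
The plan is to use the Schur complement argument, which works in the bounded-operator setting essentially as in the finite-dimensional case, once we know $A$ is invertible.

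The implication (i)$\Rightarrow$(ii) is immediate: if the block operator is strictly positive, then restricting the associated quadratic form to vectors of the form $(x,0)^t$ with $x\in\mathcal H$ shows that $A>0$.

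For the converse, first note that $A>0$ means $A\geqslant\varepsilon\cdot{\rm id}_{\mathcal H}$ for some $\varepsilon>0$, so $A$ is invertible with $A^{-1}\in\mathbb B(\mathcal H)_h$ bounded and positive. The key step is then the factorization
\[
\begin{pmatrix} A & B \\ B^* & C+\lambda\cdot{\rm id}_{\mathcal K}\end{pmatrix}
=
\begin{pmatrix} {\rm id}_{\mathcal H} & 0 \\ B^*A^{-1} & {\rm id}_{\mathcal K}\end{pmatrix}
\begin{pmatrix} A & 0 \\ 0 & D_\lambda\end{pmatrix}
\begin{pmatrix} {\rm id}_{\mathcal H} & A^{-1}B \\ 0 & {\rm id}_{\mathcal K}\end{pmatrix},
\]
where $D_\lambda := C+\lambda\cdot{\rm id}_{\mathcal K}-B^*A^{-1}B$ is the Schur complement. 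The outer triangular factors are bounded and invertible (with inverses obtained by negating the off-diagonal block), so the block operator on the left is strictly positive if and only if the block-diagonal middle factor is strictly positive, which happens if and only if both $A>0$ and $D_\lambda>0$.

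Since $B^*A^{-1}B-C$ is bounded and self-adjoint, choosing any $\lambda>\lVert B^*A^{-1}B-C\rVert$ yields $D_\lambda\geqslant(\lambda-\lVert B^*A^{-1}B-C\rVert)\cdot{\rm id}_{\mathcal K}>0$. Combined with $A>0$, this gives strict positivity of the block operator, establishing (i). The only potential obstacle is to verify the Schur factorization carries over to unbounded index sets, but since $A^{-1}$, $B$, $B^*$ and $C$ are all bounded, every factor above is a bounded operator on $\mathcal H\oplus\mathcal K$ and the identity is a routine block-matrix computation.
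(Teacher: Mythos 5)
Your proof is correct, and it takes a different (and arguably more structural) route than the paper. The paper chooses an explicit $\lambda=\frac{2\Vert B\Vert^2}{\varepsilon}+\frac{\varepsilon}{2}+\Vert C\Vert$ and verifies positivity by a direct estimate on the quadratic form $\langle Ah,h\rangle+2\operatorname{Re}\langle Bk,h\rangle+\langle(C+\lambda\operatorname{id})k,k\rangle$, completing a square with a Cauchy–Schwarz/AM–GM bound on the cross term; no inversion of $A$ is needed, and the argument is entirely elementary. You instead invoke the Schur complement: since $A\geqslant\varepsilon\cdot\operatorname{id}_{\mathcal H}$ is boundedly invertible, you conjugate the block operator by a unipotent (hence boundedly invertible) block-triangular operator to reduce to block-diagonal form, and then only need $D_\lambda=C+\lambda\cdot\operatorname{id}_{\mathcal K}-B^*A^{-1}B>0$, which holds for any $\lambda>\Vert B^*A^{-1}B-C\Vert$. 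Both arguments are valid in the bounded-operator setting; your appeal to the fact that strict positivity is preserved under congruence $X\mapsto T^*XT$ with $T$ boundedly invertible is sound (indeed $T^*XT\geqslant\delta\Vert T^{-1}\Vert^{-2}\cdot\operatorname{id}$ when $X\geqslant\delta\cdot\operatorname{id}$). What the paper's approach buys is freedom from needing $A^{-1}$ explicitly and a fully elementary inequality chain; what yours buys is a cleaner conceptual reduction and, for those who already know the Schur complement criterion, a shorter proof.
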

\begin{proof}
(i)$\Rightarrow$(ii) is obvious. For (ii)$\Rightarrow$(i)  assume $A\geqslant \varepsilon\cdot {\rm id}_{\mathcal H},$ choose $$\lambda=\frac{2\Vert B\Vert ^2}{\varepsilon} +\frac{\varepsilon}{2} +\Vert C\Vert $$ and compute \begin{align*}&\left\langle \left(\begin{array}{cc}A & B \\B^* & C+\lambda{\rm id}_{\mathcal K}\end{array}\right) \left(\begin{array}{c}h \\k\end{array}\right),\left(\begin{array}{c}h \\k\end{array}\right)\right\rangle\\&=\langle Ah,h\rangle +2{\rm Re}\langle Bk,h\rangle + \langle (C+\lambda{\rm id})k,k\rangle \\ &\geq \varepsilon\Vert h\Vert^2  -2\Vert B\Vert\Vert h\Vert\Vert k\Vert +(\lambda -\Vert C\Vert)\Vert k\Vert^2 \\ &\geq \varepsilon/2\left(\Vert h\Vert^2+\Vert k\Vert^2\right).\end{align*} This proves $\left(\begin{array}{cc}A & B \\B^* & C+\lambda\cdot{\rm id}_{\mathcal K}\end{array}\right)\geqslant \varepsilon/2\cdot {\rm id}_{\mathcal H\oplus\mathcal K}.$
 \end{proof}

\section*{Acknowledgments}The first and second author were supported by Grant No. P 29496-N35 of the Austrian Science Fund (FWF). The third author was supported by ERC Starting Grant No.\ 277728 and ERC Consolidator Grant No.\ 681207.  The results of this article are part of the PhD project of the first named author. 
\begin{bibdiv}
\begin{biblist}

\bib{bcr}{book}{
    AUTHOR = {Bochnak, Jacek},
    AUTHOR = {Coste, Michel},
    AUTHOR = {Roy, Marie-Fran\c{c}oise},
     TITLE = {Real algebraic geometry},
    SERIES = {Ergebnisse der Mathematik und ihrer Grenzgebiete (3)},
    VOLUME = {36},
   PUBLISHER = {Springer-Verlag, Berlin},
      YEAR = {1998},
     PAGES = {x+430},
}

\bib{bw}{article}{
	AUTHOR={Bridson, Martin R.},
	AUTHOR={Wilton, Henry},
	TITLE={The triviality problem for profinite completions},
	JOURNAL={Inventiones Mathematicae},
	YEAR={to appear},
}

\bib{co}{book}{
    AUTHOR = {Cooper, S. Barry},
     TITLE = {Computability theory},
 PUBLISHER = {Chapman \& Hall/CRC, Boca Raton, FL},
      YEAR = {2004},
     PAGES = {x+409},
}

\bib{hm1}{article}{
    AUTHOR = {Helton, William},
    AUTHOR = {McCullough, Scott},
     TITLE = {Every convex free basic semi-algebraic set has an {LMI}
              representation},
   JOURNAL = {Ann. of Math. (2)},
  FJOURNAL = {Annals of Mathematics. Second Series},
    VOLUME = {176},
      YEAR = {2012},
    NUMBER = {2},
     PAGES = {979--1013},
}

\bib{hkc1}{incollection}{
    AUTHOR = {Helton, William},
    AUTHOR = {Klep, Igor},
    AUTHOR = {McCullough, Scott},
     TITLE = {Free convex algebraic geometry},
 BOOKTITLE = {Semidefinite optimization and convex algebraic geometry},
    SERIES = {MOS-SIAM Ser. Optim.},
    VOLUME = {13},
     PAGES = {341--405},
 PUBLISHER = {SIAM, Philadelphia, PA},
      YEAR = {2013},
}

\bib{pau}{book}{
    AUTHOR = {Paulsen, Vern},
     TITLE = {Completely bounded maps and operator algebras},
    SERIES = {Cambridge Studies in Advanced Mathematics},
    VOLUME = {78},
 PUBLISHER = {Cambridge University Press, Cambridge},
      YEAR = {2002},
     PAGES = {xii+300},
}

\bib{pd}{book}{
    AUTHOR = {Prestel, Alexander},
    AUTHOR = {Delzell, Charles N.},
     TITLE = {Positive polynomials},
    SERIES = {Springer Monographs in Mathematics},
   PUBLISHER = {Springer-Verlag, Berlin},
      YEAR = {2001},
     PAGES = {viii+267},
}

\bib{put}{article}{
	AUTHOR = {Putinar, Mihai},
	TITLE = { Undecidability in a free $*$-algebra},
	JOURNAL = {IMA Preprint Series},
	NUMBER = {2165},
	YEAR = {2007},
	}

\end{biblist}
\end{bibdiv}

\end{document}